\newcommand{\norm}[1]{\left\lVert#1\right\rVert}
\newtheorem{theorem}{Theorem}
\newtheorem{lemma}[theorem]{Lemma}
\DeclareMathOperator{\SPAN}{span}
\title{Efficient Function Approximation in \\Enriched Approximation Spaces}
\author{Astrid Herremans and Daan Huybrechs}
\date{}
\begin{document}
\maketitle

\begin{abstract}
An enriched approximation space is the span of a conventional basis with a few extra functions included, for example to capture known features of the solution to a computational problem. Adding functions to a basis makes it overcomplete and, consequently, the corresponding discretized approximation problem may require solving an ill-conditioned system. Recent research indicates that these systems can still provide highly accurate numerical approximations under reasonable conditions. In this paper we propose an efficient algorithm to compute such approximations. It is based on the AZ algorithm for overcomplete sets and frames, which simplifies in the case of an enriched basis. In addition, analysis of the original AZ algorithm and of the proposed variant gives constructive insights on how to achieve optimal and stable discretizations using enriched bases. We apply the algorithm to examples of enriched approximation spaces in literature, including a few non-standard approximation problems and an enriched spectral method for a 2D boundary value problem, and show that the simplified AZ algorithm is indeed stable, accurate and efficient.
\end{abstract}

\section{Introduction}
For many computational problems arising in science and engineering, it is a difficult task to incorporate knowledge on the behaviour of the solution into a robust approximation method. An expert practioner can often readily identify certain functions capturing dominant characteristics of the solution. On the other hand, approximating with such non-standard basis functions turns out to be challenging, as it generally leads to ill-conditioned linear systems. Recent work \cite{adcock2019frames,adcock2020fna2} based on frames theory however indicates that highly accurate solutions can still be found if both the approximation set and the discretization are, in some sense, sufficiently rich. The first condition is associated with the need for bounded coefficient vectors, i.e., the coefficients multiplying the basis functions should not grow too large. The latter results in a shift towards least squares fitting instead of solving square systems. When these conditions are met, it can be shown that regularization mitigates the ill-conditioning concerns. These results are a strong motivation for the above-mentioned \textit{expert-driven} approximation strategy.

In this paper, we restrict our focus to enriched approximation sets consisting of a conventional basis augmented with a few extra functions, which capture certain known features of a function to be approximated. Settings in which these approximation sets may arise are plentiful. An important setting is when the solution exhibits singular behaviour, see for example generalized/extended finite element methods \cite{fix1973use,fries2010extended} and enriched spectral methods \cite{chen2018enriched,gopal2019solving}. Another context is when the solution is periodized, see for example pseudo-spectral methods \cite{roache1978pseudo}, or exhibits known oscillatory behaviour~\cite{gibbs2021high}. 

Our aim is to find accurate approximations in such enriched sets via efficient least squares fitting, for which we propose to use (a variant of) the AZ algorithm~\cite{coppe2020az}. The AZ algorithm originated with an efficient method to compute Fourier extension approximations~\cite{matthysen2016fastfe}, where the focus laid on manipulating the singular value profile of the system matrix. In a much broader sense, the algorithm can be interpreted as a strategy to reduce the dimensionality of the least squares problem using an efficient solver for a partial problem. In this paper, we make this general interpretation of the AZ algorithm rigorous and propose a constructive simplification of the algorithm in the case of enriched bases. 

In section~\ref{sec:2}, we review the numerical aspects of computing approximations in overcomplete sets as well as the AZ algorithm and its main properties. Also, two novel interpretations of the algorithm are given. A simplification of the AZ algorithm for enriched bases is then proposed in section~\ref{sec:3}. Analysis of the algorithm furthermore leads to constructive insights on how to optimally discretize these non-standard approximation sets. In section~\ref{sec:4}, the algorithm is used to compute approximations in two common examples of enriched spaces. Ultimately, it is shown in section~\ref{sec:5} how the algorithm can be used to efficiently compute enriched solutions in the context of boundary value problems. To this end, an existing enriched spectral method is interpreted as an AZ algorithm and thereafter adapted to remove certain smoothness constraints on the approximation set. The code to reproduce all numerical experiments can be found in~\cite{azcode}.

\section{The AZ algorithm for overcomplete sets} \label{sec:2}

\subsection{Discrete least squares approximation in overcomplete sets} \label{sec:fna2bound}

Consider the problem of approximating a function $f$ in a finite approximation set $\Phi_N = \{\phi_n\}_{n=1}^N$ on a domain $\Omega \subset \mathbb{R}^d$ based on discrete data. These \textit{data points} consist of $M$ samples $\{f(x_m)\}_{m=1}^M$ or, more generally, of functionals $\boldsymbol{\xi}(f) = \{\xi_{m}(f)\}_{m=1}^M$. The discrete best approximation can then be computed by solving a (rectangular) linear system 
\begin{equation}
    A\mathbf{x} = \mathbf{b}
    \label{eq:system}
\end{equation}
with $A \in \mathbb{C}^{M \times N}$, $A_{m,n} = \xi_{m}(\phi_n)$ and $b_m = \xi_{m}(f)$.

Recent work~\cite{adcock2019frames,adcock2020fna2} shows that the system matrix of~\eqref{eq:system} is generally highly ill-conditioned when approximating in an overcomplete set $\Phi$, yet that accurate results can often still be obtained. The ill-conditioning indicates that the error on the coefficients $\mathbf{x}$ can be arbitrarily large, which is a natural consequence of the (near-)redundancy in the approximation set. However, for function approximation only the residual is of interest, which can still be small when using effective regularization. In \cite[Lemma 3.3]{coppe2020az}, a bound is given on the residual when using Truncated Singular Value Decomposition (TSVD) regularization where the singular values of A below a threshold $\epsilon$ are truncated:
\begin{equation}
    \norm{\mathbf{b} - A\mathbf{x}}_2 \leq \inf_{\mathbf{c} \in \mathbb{C}^N} \{ \kern2pt \norm{\mathbf{b} - A\mathbf{c}}_2 + \epsilon \norm{\mathbf{c}}_2 \kern2pt \}.
    \label{eq:tsvdbound}
\end{equation}
The bound shows that the regularized solver strikes a balance between the residual and the coefficient norm of the solution (multiplied by the regularization threshold). It indicates that the residual can be as small as $\mathcal{O}(\epsilon)$, if there exists a solution in the approximation set with a small residual $\norm{\mathbf{b} - A\mathbf{c}}_2$ as well as a bounded coefficient vector $\norm{\mathbf{c}}_2$. It is therefore important to inspect the norm of the coefficient vector when approximating in an overcomplete set, since the two terms in~\eqref{eq:tsvdbound} are balanced. A larger coefficient norm therefore also corresponds to a larger residual and, hence, less accuracy.

On the other hand, the function $f$ is generally an element of a Hilbert space H endowed with a norm $\norm{\cdot}_H$ such that one is actually interested in an accurate approximation with respect to this H-norm. The approximation error can then be bounded by~\cite[Thm. 3.1 and Prop. 3.10]{adcock2020fna2}
\begin{equation}
    \norm{f-P^{\epsilon}_{M,N}f}_H \leq \inf_{\mathbf{c} \in \mathbb{C}^N} \left\{ \; \norm{f - \sum_{n=1}^Nc_n\phi_n}_H \; + \;  \frac{1}{\sqrt{A_{M,N}'}} \left( \kern2pt \norm{\mathbf{b} - A \mathbf{c}}_2 \; + \; \epsilon \norm{\mathbf{c}}_2 \kern2pt \right) \; \right\},
\end{equation}
where $P^\epsilon_{M,N}f$ denotes the approximation whose coefficients are the solution of~\eqref{eq:system} again using TSVD regularization. Besides the discrete residual and the norm of the coefficient vector, the error bound also contains the residual in the H-norm and a scaling factor $1/\sqrt{A_{M,N}'}$. The constant $A_{M,N}'$ measures the equivalence between the discrete norm based on the sampling functionals and the H-norm, and is defined by
\begin{equation}
    A_{M,N}' \kern1pt = \inf_{\substack{f \kern2pt \in \kern2pt \SPAN(\Phi_N) \\ \norm{f}_H=1}} \norm{\boldsymbol{\xi}(f)}_2^2.
\end{equation}
This constant is independent of the representation $\Phi_N$ and solely depends on the richness of the sampling functionals $\boldsymbol{\xi}(\cdot)$ with respect to functions in $\SPAN(\Phi_N) \subset H$. When it is small, the scaling factor $1/\sqrt{A_{M,N}'}$ grows large such that a small discrete residual does not ensure high accuracy in the continuous setting. For $A_{M,N}'$ to be bounded from below one generally needs either a judicious choice of sampling functions when $M=N$, but simpler than that is to oversample such that $M > N$ resulting in rectangular linear systems \cite{adcock2020fna2, cohen2017, GROCHENIG2020105455}. 

\subsection{AZ algorithm} \label{sec:generalaz}
The AZ algorithm~\cite[Algorithm 2.1]{coppe2020az}, recited as Algorithm \ref{alg:generalaz}, aims at solving the linear system~\eqref{eq:system} efficiently by constructing a new least squares system $A-AZ^*A$ which is low-rank. It does so by using a matrix $Z^*$ which functions as an incomplete generalized inverse. The efficiency of the AZ algorithm therefore hinges on a good choice of the matrix $Z$. In~\cite{coppe2020az}, it is shown that the construction of such a matrix is often linked to a dual frame, as illustrated for several examples including extension frames and weighted linear combinations of bases. The AZ algorithm is already used for the efficient computation of Fourier~\cite{matthysen2016fastfe}, spline~\cite{coppe2022efficient} and wavelet~\cite{coppe2020efficient} extensions.

\begin{algorithm}
\caption{The AZ algorithm}\label{alg:generalaz}
\textbf{Input:} $A,Z \in \mathbb{C}^{M\times N}, b \in \mathbb{C}^M$ \newline
\textbf{Output:} $\mathbf{x} \in \mathbb{C}^N$ such that $A\mathbf{x} \approx \mathbf{b}$ in least squares sense
\begin{algorithmic}[1]
\State Solve $(A-AZ^*A)\mathbf{x}_1 = (I-AZ^*)\mathbf{b}$
\State $\mathbf{x}_2 \gets Z^*(\mathbf{b}-A\mathbf{x}_1)$
\State $\mathbf{x} \gets \mathbf{x}_1 + \mathbf{x}_2$
\end{algorithmic}
\end{algorithm}

We first restate the computational complexity and error analysis of the algorithm. Thereafter two novel interpretations of the algorithm are given: one from an algebraic point of view and one from an analytic point of view. The first aids in interpreting the matrix $Z^*$ as a partial solver, i.e.\ for many sampled functions multiplication with $Z^*$ returns the coefficients of an accurate approximant. The latter shows that the AZ algorithm essentially performs a change of basis, i.e.\ it uses the partial solver to switch to a lower-dimensional fitting problem.

\subsubsection{Properties of the AZ algorithm}
The AZ algorithm aims at efficiently solving~\eqref{eq:system} by creating a new least squares problem with system matrix $A-AZ^*A$. The newly obtained system matrix should be low-rank in order to have a reduced computational cost compared to solving the original system $A\mathbf{x}\approx \mathbf{b}$. Using a randomized TSVD solver, one can exploit this low rank resulting in a computational cost
\begin{equation}
    \mathcal{O}(RT_{mult} + MR^2) \text{ flops,}
\end{equation}
where $R$ equals the (numerical) rank of $A-AZ^*A$ and $T_{mult}$ equals the cost of a matrix-vector multiplication with $A-AZ^*A$. This also reveals a new condition on $A$ and $Z^*$ needed to obtain an efficient AZ algorithm: both matrices should have efficient matrix-vector multiplications.

In essence, the discrete approximation problem is solved approximately, since the AZ algorithm only uses expensive least squares fitting for a lower-dimensional subproblem. The accuracy could therefore decrease compared to using least squares fitting for the complete problem. A key element in analysing the error of the AZ algorithm is the fact that the final residual is equal to the residual of the first AZ equation~\cite[Lemma 2.1]{coppe2020az}, i.e.\ the second AZ equation does not introduce an error. As explained in~\S\ref{sec:fna2bound}, the achievable accuracy of least squares approximations in overcomplete sets depends not only on the size of the residual but also on the norm of the coefficient vector. In~\cite[Lemma 2.2]{coppe2020az} a bound is given for the growth of both of these quantities when using the AZ algorithm to solve~\eqref{eq:system}.
\begin{lemma}[{\cite[Lemma 2.2]{coppe2020az}}] \label{lm:error}
    Let $A\in \mathbb{C}^{M \times N}$, $\mathbf{b} \in \mathbb{C}^M$, and suppose there exists $\Tilde{\mathbf{x}} \in \mathbb{C}^N$ such that 
    \[
    \norm{\mathbf{b} - A\Tilde{\mathbf{x}}}_2 \leq \tau, \quad \norm{\Tilde{\mathbf{x}}}_2 \leq C,
    \]
    for $\tau, C > 0$. Then there exists a solution $\hat{\mathbf{x}}_1$ to step 1 of the AZ algorithm such that the residual of the computed vector $\hat{\mathbf{x}} = \hat{\mathbf{x}}_1 + \hat{\mathbf{x}}_2$ satisfies,
    \[
        \norm{\mathbf{b} - A\hat{\mathbf{x}}}_2 \leq \norm{I-AZ^*}_2\tau, \quad \norm{\hat{\mathbf{x}}}_2 \leq C + \norm{Z^*}_2 \tau.
    \]
\end{lemma}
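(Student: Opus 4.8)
The plan is to exhibit an explicit admissible choice for the vector $\hat{\mathbf{x}}_1$ returned by step 1 and then to propagate the two hypotheses on $\tilde{\mathbf{x}}$ through steps 2 and 3. The natural candidate is $\hat{\mathbf{x}}_1 = \tilde{\mathbf{x}}$: because $A - AZ^*A$ is (numerically) low-rank, step 1 is a rank-deficient least squares problem whose solution is not unique, and $\tilde{\mathbf{x}}$ realizes a step-1 residual $\norm{(I-AZ^*)(\mathbf{b}-A\tilde{\mathbf{x}})}_2 \leq \norm{I-AZ^*}_2\,\tau$. Before specializing, I would record two facts that hold for \emph{any} $\hat{\mathbf{x}}_1$. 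Combining steps 2 and 3 gives $\hat{\mathbf{x}} = \hat{\mathbf{x}}_1 + Z^*(\mathbf{b}-A\hat{\mathbf{x}}_1)$, and applying $A$ and subtracting from $\mathbf{b}$ yields the residual identity $\mathbf{b}-A\hat{\mathbf{x}} = (I-AZ^*)(\mathbf{b}-A\hat{\mathbf{x}}_1)$, which is exactly the statement (Lemma 2.1) that step 2 introduces no additional residual.

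For the residual bound I would evaluate the residual identity at $\hat{\mathbf{x}}_1 = \tilde{\mathbf{x}}$, so that $\mathbf{b}-A\hat{\mathbf{x}} = (I-AZ^*)(\mathbf{b}-A\tilde{\mathbf{x}})$. Taking norms and using the operator-norm inequality together with the hypothesis $\norm{\mathbf{b}-A\tilde{\mathbf{x}}}_2 \leq \tau$ gives $\norm{\mathbf{b}-A\hat{\mathbf{x}}}_2 \leq \norm{I-AZ^*}_2\,\norm{\mathbf{b}-A\tilde{\mathbf{x}}}_2 \leq \norm{I-AZ^*}_2\,\tau$, which is the first claimed inequality.

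For the coefficient bound I would use $\hat{\mathbf{x}} = \hat{\mathbf{x}}_1 + Z^*(\mathbf{b}-A\hat{\mathbf{x}}_1)$ at the same choice $\hat{\mathbf{x}}_1 = \tilde{\mathbf{x}}$, giving $\hat{\mathbf{x}} = \tilde{\mathbf{x}} + Z^*(\mathbf{b}-A\tilde{\mathbf{x}})$. The triangle inequality and the operator-norm inequality then yield $\norm{\hat{\mathbf{x}}}_2 \leq \norm{\tilde{\mathbf{x}}}_2 + \norm{Z^*}_2\,\norm{\mathbf{b}-A\tilde{\mathbf{x}}}_2 \leq C + \norm{Z^*}_2\,\tau$, using $\norm{\tilde{\mathbf{x}}}_2 \leq C$ and $\norm{\mathbf{b}-A\tilde{\mathbf{x}}}_2 \leq \tau$. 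This is the second claimed inequality and makes transparent why the coefficient norm grows only by the controlled amount $\norm{Z^*}_2\,\tau$.

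The step I expect to be the main obstacle is the interpretation of ``a solution $\hat{\mathbf{x}}_1$ to step 1''. Since step 1 is rank-deficient its solution is far from unique, and a black-box solver might return a minimizer whose \emph{full} residual $\norm{\mathbf{b}-A\hat{\mathbf{x}}_1}_2$ — as opposed to the projected residual $\norm{(I-AZ^*)(\mathbf{b}-A\hat{\mathbf{x}}_1)}_2$ that step 1 actually controls — is large, thereby inflating $\norm{\hat{\mathbf{x}}_2}_2 = \norm{Z^*(\mathbf{b}-A\hat{\mathbf{x}}_1)}_2$. This is precisely why the statement asserts the \emph{existence} of a good solution rather than a bound uniform over all solutions: the content is that $\tilde{\mathbf{x}}$ is itself an admissible realization of step 1, at which the full residual is the small quantity $\norm{\mathbf{b}-A\tilde{\mathbf{x}}}_2 \leq \tau$. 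I would therefore take care to argue that $\tilde{\mathbf{x}}$ qualifies as a solution of step 1, i.e.\ that it is consistent to set $\hat{\mathbf{x}}_1 = \tilde{\mathbf{x}}$, after which both displayed bounds follow immediately from the elementary estimates above.
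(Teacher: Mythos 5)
Your proof is correct and takes essentially the same route as the proof of this lemma in \cite{coppe2020az} (the present paper only cites the result): one takes $\hat{\mathbf{x}}_1 = \Tilde{\mathbf{x}}$ as an admissible solution of the rank-deficient step-1 problem, invokes the residual identity $\mathbf{b}-A\hat{\mathbf{x}} = (I-AZ^*)(\mathbf{b}-A\hat{\mathbf{x}}_1)$ (Lemma 2.1 there, quoted in \S\ref{sec:generalaz} here), and finishes with the triangle and operator-norm inequalities. Your closing observation --- that the lemma is an existence statement precisely because a black-box solver of the rank-deficient step 1 could return a minimizer with large $\norm{\mathbf{b}-A\hat{\mathbf{x}}_1}_2$, and that $\Tilde{\mathbf{x}}$ is singled out as the good realization --- is exactly the intended reading.
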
 
Since $\norm{I-AZ^*}_2 \leq 1 + \norm{A}_2\norm{Z^*}_2$, accurate solutions are guaranteed when $\norm{A}_2$ and $\norm{Z}_2$ are sufficiently bounded.

\subsubsection{Algebraic interpretation: annihilator \boldmath{$(I-AZ^*)$}}
Recall that a true generalized inverse $A^g$ of a matrix $A$ satisfies 
\[
AA^gA=A.
\]
It means that $\mathbf{x}=A^g\mathbf{b}$ solves the linear system~\eqref{eq:system} whenever the right hand side has the form $\mathbf{b}=A\mathbf{c}$ for some vector $\mathbf{c}$, i.e., it is in the column space of $A$. Indeed, in that case $\mathbf{x}=A^g\mathbf{b}$ leads to
\[
 A\mathbf{x} = A A^g\mathbf{b} = A A^g A \mathbf{c} = A\mathbf{c} = \mathbf{b}.
\]

Now assume that $Z^*$ is an incomplete generalized inverse in the sense that
\[
A - A Z^* A
\]
has low rank, instead of being zero. That means that $\mathbf{x}=Z^*\mathbf{b}$ is a solver for~\eqref{eq:system} on a large part of the column space of $A$, though possibly not all of it.

One can split the right hand side of~\eqref{eq:system} as $\mathbf{b} = \mathbf{b}_1 + \mathbf{b}_2$, in which $\mathbf{b}_2$ is a suitable right hand side for $Z^*$, i.e., $Ax=\mathbf{b}_2$ is solved by $Z^*\mathbf{b}_2$. This is equivalent to $AZ^*\mathbf{b}_2 = \mathbf{b}_2$ or
\begin{equation}\label{eq:annihilator}
 (I - AZ^*) \mathbf{b}_2 = 0.
\end{equation}
One can think of $I - AZ^*$ as an annihilator for most of the column space of $A$.

An analogous division for the unknown of~\eqref{eq:system} results in $\mathbf{x} = \mathbf{x}_1 + \mathbf{x}_2$ where $\mathbf{x}_2 = Z^* \mathbf{b}_2$. From~\eqref{eq:annihilator} it follows that
\[
 (I - AZ^*) A \mathbf{x}_2 = (I - AZ^*) \mathbf{b}_2 = 0.
\]
By multiplying both sides of~\eqref{eq:system} by $I - AZ^*$, one therefore arrives at an equation for $\mathbf{x}_1$:
\[
 (I-AZ^*) A \mathbf{x} =  (I-AZ^*) A (\mathbf{x}_1 + \mathbf{x}_2) =  (I-AZ^*) A \mathbf{x}_1 =  (I-AZ^*) \mathbf{b}.
\]
This is exactly the \emph{first AZ equation}
\[
 (A - A Z^*A) \mathbf{x}_1 = b - AZ^*\mathbf{b}.
\]
By the assumption on $Z^*$, this is a linear system with low rank. One recovers $\mathbf{x}_2=Z^* \mathbf{b}_2$ using $\mathbf{b}_2 = \mathbf{b}- \mathbf{b}_1$, which with $\mathbf{b}_1 = A \mathbf{x}_1$ leads to the \emph{second AZ equation}
\[
 \mathbf{x}_2 = Z^* (\mathbf{b} - A \mathbf{x}_1).
\]
Finally,
\[
 \mathbf{x} = \mathbf{x}_1 +  \mathbf{x}_2.
\]

This algebraic derivation of the algorithm facilitates the intepretation of the AZ algorithm in the context of approximation theory. It shows that the matrix $Z^*$ functions as a \textit{partial solver}: for many sampled functions, one obtains the coefficients of an accurate approximation in the set $\Phi_N$ by multiplication with $Z^*$.

\subsubsection{Analytic interpretation: change of basis \boldmath{$(I-Z^*A$)}} \label{sec:changeofbasis}

In this section the notation of~\cite{adcock2019frames} is used. Consider the approximation problem as outlined in \S\ref{sec:fna2bound}. The AZ algorithm does not directly compute the discrete best approximation of $f$ in the set $\Phi_N$, but first applies a partial solver $Z^*$ to its sampled data $\mathbf{b}$
\[
f = \mathcal{T}_NZ^*\mathbf{b} + \Tilde{f},
\]
where $\mathcal{T}_N$ denotes the \textit{synthesis operator} associated to $\Phi_N$, which can be written as a quasi-matrix of size $\infty \times N$
\[
 \mathcal{T}_{N} = \begin{bmatrix} \phi_1 & \phi_2 & \dots & \phi_{N} \end{bmatrix}.
\]
The problem then shifts to approximating the remainder function $\Tilde{f}$. 

Assume for a moment that $f$ is in $\SPAN(\Phi_N)$, such that $f = \mathcal{T}_N\mathbf{c}$ and $\mathbf{b} = A\mathbf{c}$ for some vector $\mathbf{c}$ without approximation error. It then follows that $\Tilde{f}$ can be rewritten as
\[
\Tilde{f} = f - \mathcal{T}_NZ^*\mathbf{b} = \mathcal{T}_N\mathbf{c} - \mathcal{T}_NZ^*A\mathbf{c} = \mathcal{T}_N(I-Z^*A) \mathbf{c}.
\]
In general, it is therefore natural to approximate the remainder function in a new approximation set $\Tilde{\Phi}_N$ with synthesis operator $\Tilde{\mathcal{T}}_N$
\begin{equation}
    \Tilde{f} \approx \Tilde{\mathcal{T}}_N\mathbf{x}_1 \quad \text{ with }\quad \Tilde{\mathcal{T}_N} = \mathcal{T}_N(I-Z^*A).
    \label{eq:phitilde}
\end{equation}
This problem is solved in the first AZ equation: one computes the discrete best approximation of $\Tilde{f}$ in $\Tilde{\Phi}_N$. This becomes clear when writing the system matrix $A$ as
\[
A = \mathcal{M}_M \mathcal{T}_{N}
\]
using the \textit{sampling operator} $\mathcal{M}_M: f \mapsto \boldsymbol{\xi}(f)$ and noting that the system matrix of the first AZ equation can be rewritten accordingly:
\[
 A - AZ^*A = A(I-Z^*A) = \mathcal{M}_M \mathcal{T}_{N}(I-Z^*A) = \mathcal{M}_M \Tilde{\mathcal{T}}_{N}.
\]
Furthermore, the right-hand side of the first AZ equation indeed contains the data of the remainder function $\Tilde{f}$
\[
(I-AZ^*)\mathbf{b} = \mathcal{M}_M f - \mathcal{M}_M \mathcal{T}_N Z^*\mathbf{b} = \mathcal{M}_M \Tilde{f}.
\]
The second AZ equation immediately follows from regrouping the coefficients of the approximant
\begin{align*}
    f &\approx \mathcal{T}_NZ^*\mathbf{b} + \Tilde{\mathcal{T}}_{N}\mathbf{x}_1 \\ &= \mathcal{T}_NZ^*\mathbf{b} + \mathcal{T}_{N}(I-Z^*A)\mathbf{x}_1 \\ &= \mathcal{T}_{N}\mathbf{x}_1 + \mathcal{T}_{N}(Z^*\mathbf{b} - Z^*A\mathbf{x}_1)
    \\ &\eqqcolon \mathcal{T}_{N}\mathbf{x}_1 + \mathcal{T}_{N}\mathbf{x}_2 \\
    &= \mathcal{T}_N \mathbf{x}
\end{align*}
from which we see that
\[
    \mathbf{x}_2 = Z^*(\mathbf{b} - A\mathbf{x}_1).
\]

It is interesting to inspect the newly obtained basis $\Tilde{\Phi}_N$. From~\eqref{eq:phitilde}, it follows that $\Tilde{\Phi}_N = \{\Tilde{\phi}_i\}_{i=1}^N$ is defined by
\[
\Tilde{\phi}_i = \phi_i - \sum_{i=1}^N c_i \phi_i \quad \text{ where } \mathbf{c} = Z^* \mathcal{M}_M\phi_i.
\]
The approximation set thus consists of the original basis functions $\{\phi_i\}^N_{i=1}$ minus their approximation in $\Phi$ using the partial solver $Z^*$. The system $A-AZ^*A$ being low-rank translates into the newly obtained basis functions $\Tilde{\Phi}$ spanning a substantially smaller (sampled) space than the original approximation set $\Phi$. We make these observations concrete in some examples further on.

\section{The AZ algorithm for enriched bases} \label{sec:3}

A set of $N$ conventional basis functions $\{\varphi_n\}_{n=1}^N$ enriched with $K$ extra functions $\{\psi_k\}_{k=1}^K$ leads to a finite approximation set
\begin{equation}
    \Phi_{N+K} = \{ \phi_i \}_{i=1}^{N+K} = \{\varphi_n\}_{n=1}^N \cup \{\psi_k\}_{k=1}^K,
    \label{eq:enrichedbasis}
\end{equation}
which we term an \textit{enriched basis}. A function $f$ can be approximated in this enriched basis by computing the discrete best approximation~\eqref{eq:system} using data points such as samples $f(x_m)$ or, more generally, functionals $\xi_m(f)$. 

It is natural to assume that an efficient solver already exists to compute approximations in the conventional basis $\{\varphi_n\}_{n=1}^N$ using $M_N$ data points, which often exhibit structure that enables efficient operations. In general, one also adds $M_K$ data points to sufficiently sample the behaviour of the additional functions. These points usually do not have similar structure, but their number is small. The total number of data points equals $M_N + M_K$, resulting in a system matrix
\begin{equation}
    A = \begin{blockarray}{ccc}
 & N & K \\
\begin{block}{c[cc]}
M_N & A_{11} & A_{12} \bigstrut[t] \\
M_K & A_{21} & A_{22} \bigstrut[b]\\
\end{block}
\end{blockarray}
\label{eq:A}
\end{equation}
with $A \in \mathbb{C}^{(M_N + M_K) \times (N + K)}$. 

From \S\ref{sec:generalaz} it follows that the AZ algorithm is an efficient algorithm to solve \eqref{eq:system}, if $Z^*$ is chosen as an incomplete generalized inverse of $A$ such that $A-AZ^*A$ is low-rank. This is equivalent to $Z^*$ being a solver for \eqref{eq:system} on a large part of the column space of A. One way to achieve this is by constructing it using the existing solver for the basis $\{\varphi_n\}_{n=1}^N$. Denoting the latter by $Z_{11}^*\in \mathbb{C}^{N \times M_N}$, one can simply construct a matrix $Z^*$ as
\begin{equation}
    Z^* = \begin{blockarray}{ccc}
     & M_N & M_K \\
    \begin{block}{c[cc]}
    N & Z^*_{11} & 0 \bigstrut[t] \\
    K & 0 & 0 \bigstrut[b]\\
    \end{block}
    \end{blockarray}.
\label{eq:Z}
\end{equation}
Multiplication by $Z^*$ then solves \eqref{eq:system} for each function in $\SPAN(\{\varphi_n\}_{n=1}^N)$. This is a large part of the column space of $A$, if one assumes that the number of extra functions $K$ is small. Based on the properties of the existing solver $Z_{11}^*$, one obtains different simplifications of the AZ algorithm, where $A-AZ^*A$ is not only low-rank but also sparse.

\begin{theorem}[AZ algorithm for enriched bases] \label{thm}
    Consider Algorithm \ref{alg:generalaz} (the AZ algorithm \cite{coppe2020az}) to solve~\eqref{eq:system} with $A$ as defined by~\eqref{eq:A} and choosing $Z^*$ as defined by~\eqref{eq:Z}.
    \begin{enumerate}
        \item If $Z_{11}^*$ is the inverse of $A_{11}$, the system matrix of the first AZ equation equals
            \begin{equation}
                A - AZ^*A = \begin{bmatrix} 0 & 0 \\ 0 & A_{22} - A_{21}Z_{11}^*A_{12} \end{bmatrix}, \label{eq:thm1}
            \end{equation}
        where the non-zero block is of size $M_K \times K$.
        \item If $Z_{11}^*$ is a left inverse of $A_{11}$, the system matrix of the first AZ equation equals
            \begin{equation}
                A - AZ^*A = \begin{bmatrix} 0 & A_{12} - A_{11}Z_{11}^*A_{12} \\ 0 & A_{22} - A_{21}Z_{11}^*A_{12} \end{bmatrix}, \label{eq:thm2}
            \end{equation}
        where the non-zero block is of size $(M_N + M_K) \times K$.
        \item If $Z_{11}^*$ is a right inverse of $A_{11}$, the system matrix of the first AZ equation equals
        \begin{equation}
            A - AZ^*A = \begin{bmatrix} 0 & 0 \\ A_{21} - A_{21}Z_{11}^*A_{11} & A_{22} - A_{21}Z_{11}^*A_{12} \end{bmatrix},
            \label{eq:thm3}
        \end{equation}
        where the nonzero block is of size $M_K \times (N + K)$.
    \end{enumerate}
\end{theorem}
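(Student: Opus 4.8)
The plan is to carry out a single block-matrix computation of $A - AZ^*A$ from the structures of $A$ in~\eqref{eq:A} and $Z^*$ in~\eqref{eq:Z}, and then to specialize the resulting expression to each of the three hypotheses on $Z_{11}^*$ by reading off which blocks vanish. Since all three cases share the same starting point, I would prove the general block form once and treat the three items as immediate corollaries of it.

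First I would multiply out $Z^*A$ and $AZ^*A$. Because $Z^*$ has only its $(1,1)$ block nonzero, the product $Z^*A$ retains only the top block-row, and left-multiplying by $A$ yields
\[
A - AZ^*A = \begin{bmatrix} A_{11} - A_{11} Z_{11}^* A_{11} & A_{12} - A_{11} Z_{11}^* A_{12} \\ A_{21} - A_{21} Z_{11}^* A_{11} & A_{22} - A_{21} Z_{11}^* A_{12} \end{bmatrix}.
\]
This is the common template for all three claims. Note that the $(2,2)$ block $A_{22} - A_{21} Z_{11}^* A_{12}$ is already exactly the Schur-like term appearing in~\eqref{eq:thm1}, \eqref{eq:thm2}, and~\eqref{eq:thm3}, and it never simplifies under any of the hypotheses; so the only remaining work is to determine which of the other three blocks collapse to zero.

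The second step is to substitute the defining identity for $Z_{11}^*$ in each case. When $Z_{11}^*$ is a two-sided inverse (item 1), both $Z_{11}^* A_{11} = I$ and $A_{11} Z_{11}^* = I$ are available, so every occurrence of $Z_{11}^* A_{11}$ or $A_{11} Z_{11}^*$ cancels and the $(1,1)$, $(1,2)$, and $(2,1)$ blocks all vanish, giving~\eqref{eq:thm1}. When $Z_{11}^*$ is only a left inverse (item 2), only $Z_{11}^* A_{11} = I_N$ holds; this annihilates the $(1,1)$ and $(2,1)$ blocks, in which $Z_{11}^* A_{11}$ appears, but leaves the $(1,2)$ block $A_{12} - A_{11} Z_{11}^* A_{12}$ intact, giving~\eqref{eq:thm2}. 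When $Z_{11}^*$ is only a right inverse (item 3), only $A_{11} Z_{11}^* = I_{M_N}$ holds; this annihilates the $(1,1)$ and $(1,2)$ blocks but leaves the $(2,1)$ block $A_{21} - A_{21} Z_{11}^* A_{11}$, giving~\eqref{eq:thm3}.

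The argument is elementary and I do not anticipate a genuine obstacle; the one point demanding care is bookkeeping — distinguishing the left-inverse identity $Z_{11}^* A_{11} = I_N$ from the right-inverse identity $A_{11} Z_{11}^* = I_{M_N}$ and matching each to the precise position in which the product $Z_{11}^* A_{11}$ or $A_{11} Z_{11}^*$ occurs within a given block. It is exactly this left/right asymmetry that explains why items 2 and 3 retain complementary off-diagonal blocks. Finally I would record the consistency of the block dimensions implied by each hypothesis (item 1 forces $A_{11}$ square with $M_N = N$, item 2 requires $M_N \geq N$, and item 3 requires $M_N \leq N$), which confirms the stated sizes of the surviving nonzero blocks.
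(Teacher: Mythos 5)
Your proposal is correct and follows essentially the same route as the paper: a single block-matrix computation of $A - AZ^*A$ yielding the common template $\begin{bmatrix} A_{11} - A_{11}Z_{11}^*A_{11} & A_{12} - A_{11}Z_{11}^*A_{12} \\ A_{21} - A_{21}Z_{11}^*A_{11} & A_{22} - A_{21}Z_{11}^*A_{12} \end{bmatrix}$, followed by case-by-case cancellation using the relevant inverse identity. Your explicit tracking of which identity ($Z_{11}^*A_{11} = I$ versus $A_{11}Z_{11}^* = I$) kills which block, and the dimension bookkeeping, merely spell out what the paper dismisses as ``straightforwardly simplifies.''
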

\begin{proof}
    Computing $A-AZ^*A$ using \eqref{eq:A} and \eqref{eq:Z} results in
    \begin{align*}
        A - AZ^*A &= \begin{bmatrix} A_{11} & A_{12} \\ A_{21} & A_{22} \end{bmatrix} - \begin{bmatrix} A_{11} & A_{12} \\ A_{21} & A_{22} \end{bmatrix}\begin{bmatrix} Z_{11}^* & 0 \\ 0 & 0 \end{bmatrix} \begin{bmatrix} A_{11} & A_{12} \\ A_{21} & A_{22} \end{bmatrix} \\
        &= \begin{bmatrix} A_{11} & A_{12} \\ A_{21} & A_{22} \end{bmatrix} - \begin{bmatrix} A_{11}Z_{11}^* & 0 \\ A_{21}Z_{11}^* & 0 \end{bmatrix} \begin{bmatrix} A_{11} & A_{12} \\ A_{21} & A_{22} \end{bmatrix} \\ &= \begin{bmatrix} A_{11} - A_{11}Z_{11}^*A_{11} & A_{12} - A_{11}Z_{11}^*A_{12} \\ A_{21} - A_{21}Z_{11}^*A_{11} & A_{22} - A_{21}Z_{11}^*A_{12} \end{bmatrix}.
    \end{align*}
    This straightforwardly simplifies to \eqref{eq:thm1}, \eqref{eq:thm2} and \eqref{eq:thm3}.
\end{proof}

A few remarks are in order. Firstly, observe that when the matrix $A$ is square, the AZ algorithm with system matrix defined by~\eqref{eq:thm1} is equivalent to solving the linear system $A\mathbf{x} = \mathbf{b}$ using the Schur complement of $A$ relative to $A_{11}$ \cite{zhang2006schur}. In the current case, it is assumed that $A$ is rectangular and $A\mathbf{x} \approx \mathbf{b}$ can only be solved in a least squares sense. Secondly, note that no randomized solver is needed to exploit the structure of the matrices in Theorem~\ref{thm} due to their sparse block structure. The computational cost of the AZ algorithm is dominated by the cost of constructing $A-AZ^*A$ and solving the first AZ equation with the lower-dimensional system matrix. Thirdly, due to the choice of $Z^*$~\eqref{eq:Z}, the error of the AZ algorithm compared to solving~\eqref{eq:system} directly described in Lemma~\ref{lm:error}, can be simplified. The possible growth of the error now only depends on the norm of the subblocks $A_{11}$, $A_{21}$ and $Z_{11}$, which are independent of the extra functions $\{\psi_k\}_{k=1}^K$.

\begin{theorem}[Error of the AZ algorithm for enriched bases]
    \label{thm:enrichederror}
    If one defines $A$ by~\eqref{eq:A} and $Z^*$ by~\eqref{eq:Z} with $N,K,M_N,M_K > 0$,
    \[
        \norm{I-AZ^*}_2 \leq 1 + \norm{Z_{11}^*}_2(\norm{A_{11}}_2 + \norm{A_{21}}_2), \qquad \norm{Z^*}_2 = \norm{Z_{11}^*}_2.
    \]
    Therefore, under the same conditions as Lemma~\ref{lm:error}, there exists a solution $\hat{\mathbf{x}}_1$ to step 1 of the AZ algorithm for enriched bases such that the residual of the computed vector $\hat{\mathbf{x}} = \hat{\mathbf{x}}_1 + \hat{\mathbf{x}}_2$ satisfies,
    \begin{equation}
        \norm{\mathbf{b} - A\hat{\mathbf{x}}}_2 \leq \left( 1 + \norm{Z_{11}^*}_2(\norm{A_{11}}_2 + \norm{A_{21}}_2) \right)\kern2pt \tau, \qquad \norm{\hat{\mathbf{x}}}_2 \leq C + \norm{Z_{11}^*}_2 \tau.
        \label{eq:I_AZstar}
    \end{equation}
\end{theorem}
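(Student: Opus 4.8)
The plan is to establish the two spectral-norm estimates stated in the theorem and then obtain the residual and coefficient bounds by direct substitution into Lemma~\ref{lm:error}; since that substitution is immediate, essentially all the work lies in the two norm estimates. I would begin by forming $AZ^*$ from the block decompositions~\eqref{eq:A} and~\eqref{eq:Z}. Because $Z^*$ carries only the $Z_{11}^*$ block, the product annihilates the last $M_K$ columns and yields
\[
AZ^* = \begin{bmatrix} A_{11}Z_{11}^* & 0 \\ A_{21}Z_{11}^* & 0 \end{bmatrix}, \qquad I - AZ^* = \begin{bmatrix} I - A_{11}Z_{11}^* & 0 \\ -A_{21}Z_{11}^* & I \end{bmatrix}.
\]

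For the bound on $\norm{I-AZ^*}_2$ I would apply the triangle inequality, $\norm{I-AZ^*}_2 \le \norm{I}_2 + \norm{AZ^*}_2 = 1 + \norm{AZ^*}_2$, and then note that deleting the zero columns leaves the spectral norm unchanged, so $\norm{AZ^*}_2 = \norm{\left[\begin{smallmatrix} A_{11} \\ A_{21} \end{smallmatrix}\right] Z_{11}^*}_2$. Submultiplicativity bounds this by $\norm{\left[\begin{smallmatrix} A_{11} \\ A_{21} \end{smallmatrix}\right]}_2 \norm{Z_{11}^*}_2$, and for the stacked block I would use the identity $\norm{\left[\begin{smallmatrix} B \\ C \end{smallmatrix}\right] v}_2^2 = \norm{Bv}_2^2 + \norm{Cv}_2^2$ together with $\sqrt{a^2+b^2}\le a+b$ for $a,b\ge 0$ to get $\norm{\left[\begin{smallmatrix} A_{11} \\ A_{21} \end{smallmatrix}\right]}_2 \le \norm{A_{11}}_2 + \norm{A_{21}}_2$. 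Combining these gives the first claimed inequality.

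For the identity $\norm{Z^*}_2 = \norm{Z_{11}^*}_2$ I would argue directly from the block structure: for any input split as $v=(v_1,v_2)$ one has $Z^*v = (Z_{11}^*v_1,0)$, whence $\norm{Z^*v}_2 = \norm{Z_{11}^*v_1}_2 \le \norm{Z_{11}^*}_2\norm{v_1}_2 \le \norm{Z_{11}^*}_2\norm{v}_2$, giving the ``$\le$'' direction, with the reverse obtained by taking $v_2=0$ and $v_1$ a maximizing vector for $Z_{11}^*$. Substituting both estimates into Lemma~\ref{lm:error}, whose hypotheses transfer verbatim, then produces the two displayed bounds~\eqref{eq:I_AZstar}. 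I do not expect any genuine obstacle here; the only points requiring mild care are the stacked-block inequality $\sqrt{a^2+b^2}\le a+b$ and the observation that padding a matrix with zero columns preserves its spectral norm.
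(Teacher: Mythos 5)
Your proposal is correct and matches the paper's proof in all essentials: both rest on the block form of $AZ^*$, the triangle inequality, and submultiplicativity of the spectral norm, followed by direct substitution into Lemma~\ref{lm:error}. The only difference is bookkeeping --- the paper splits $AZ^*$ additively into the two single-block matrices containing $A_{11}Z_{11}^*$ and $A_{21}Z_{11}^*$ and bounds each product separately, whereas you factor $AZ^*$ as the stacked block $\left[\begin{smallmatrix} A_{11} \\ A_{21} \end{smallmatrix}\right]$ times $Z_{11}^*$ (after discarding zero columns) and use $\norm{\left[\begin{smallmatrix} A_{11} \\ A_{21} \end{smallmatrix}\right]}_2 \leq \norm{A_{11}}_2 + \norm{A_{21}}_2$; both routes yield the identical final estimate, yours even passing through the marginally sharper intermediate bound $\sqrt{\norm{A_{11}}_2^2 + \norm{A_{21}}_2^2}\,\norm{Z_{11}^*}_2$.
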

\begin{proof}
    One can rewrite $I-AZ^*$ as follows
    \[
        I-AZ^* = I - \begin{bmatrix}
            A_{11}Z_{11}^* & 0 \\ A_{21}Z_{11}^* & 0
        \end{bmatrix} = I - \begin{bmatrix}
            A_{11}Z_{11}^* & 0 \\ 0 & 0
        \end{bmatrix} - \begin{bmatrix}
            0 & 0 \\ A_{21}Z_{11}^* & 0
        \end{bmatrix}.
    \]
    From here, it follows that
    \[
     \norm{I-AZ^*}_2 \leq \norm{I}_2 + \norm{A_{11}Z_{11}^*}_2 + \norm{A_{21}Z_{11}^*}_2 \leq \norm{I}_2 + \norm{Z_{11}^*}_2 (\norm{A_{11}}_2 + \norm{A_{21}}_2).
    \]
    The expression for $\norm{Z^*}_2$ follows trivially from~\eqref{eq:Z}.
\end{proof}

A milder condition than those of Theorem~\ref{thm} is the assumption that an efficient AZ algorithm exists for approximation in $\{\varphi_n\}_{n=1}^N$. In this case, one has an incomplete generalized inverse $Z^*_{11}$ of $A_{11}$. Using the construction~\eqref{eq:Z} for the matrix $Z^*$, this leads to a generally non-sparse AZ algorithm for approximation in the enriched space.

\begin{lemma}
    Consider Algorithm \ref{alg:generalaz} (the AZ algorithm \cite{coppe2020az}) to solve~\eqref{eq:system} with $A$ as defined by~\eqref{eq:A} and choosing $Z^*$ as defined by~\eqref{eq:Z}. If $Z_{11}^*$ is an incomplete generalized inverse of $A_{11}$, in the sense that the rank of $A_{11}-A_{11}Z_{11}^*A_{11}$ equals $L < N$, the system matrix of the first AZ equation equals
    \begin{equation}
        A - AZ^*A = \begin{bmatrix} A_{11} - A_{11}Z_{11}^*A_{11} & A_{12} - A_{11}Z_{11}^*A_{12} \\ A_{21} - A_{21}Z_{11}^*A_{11} & A_{22} - A_{21}Z_{11}^*A_{12} \end{bmatrix}, \label{eq:thm4}
    \end{equation}
    with rank at most $L + M_K + K$.
\end{lemma}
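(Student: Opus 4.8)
The block expression~\eqref{eq:thm4} for $A - AZ^*A$ requires no special assumption on $Z_{11}^*$: it is exactly the matrix obtained in the proof of Theorem~\ref{thm}, prior to the simplifications that used a one-sided or two-sided inverse. So the first claim is immediate by reusing that computation, and all the work lies in the rank bound.

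For the rank bound, the plan is to exploit subadditivity of the rank under both row- and column-concatenation, namely $\mathrm{rank}\begin{bmatrix} X & Y\end{bmatrix} \le \mathrm{rank}(X) + \mathrm{rank}(Y)$ and $\mathrm{rank}\begin{bmatrix} X \\ Y\end{bmatrix} \le \mathrm{rank}(X) + \mathrm{rank}(Y)$, and to peel off the three contributions to the rank one at a time in the right order. First I would split~\eqref{eq:thm4} along the horizontal block line into its top $M_N$ rows and its bottom $M_K$ rows. The bottom block $\begin{bmatrix} A_{21} - A_{21}Z_{11}^*A_{11} & A_{22} - A_{21}Z_{11}^*A_{12}\end{bmatrix}$ has only $M_K$ rows, hence rank at most $M_K$, which accounts for the $M_K$ term.

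It then remains to bound the rank of the top block $\begin{bmatrix} A_{11} - A_{11}Z_{11}^*A_{11} & A_{12} - A_{11}Z_{11}^*A_{12}\end{bmatrix}$. Here I would split along the vertical block line into its left $N$ columns and right $K$ columns. The right block $A_{12} - A_{11}Z_{11}^*A_{12}$ has only $K$ columns, hence rank at most $K$, accounting for the $K$ term, while the left block is precisely $A_{11} - A_{11}Z_{11}^*A_{11}$, whose rank equals $L$ by the standing hypothesis on $Z_{11}^*$. Summing the three contributions yields $\mathrm{rank}(A - AZ^*A) \le L + M_K + K$.

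The only point requiring care — and the nearest thing to an obstacle — is the grouping. Bounding the four blocks individually would give the weaker estimate $L + K + M_K + \min(M_K,K)$, so the argument must keep the bottom $M_K$ rows together (producing a single $M_K$) and, within the top block, keep the right $K$ columns together (producing a single $K$), thereby isolating the top-left block so that its known rank $L$ enters directly. With this ordering the bound is exactly $L + M_K + K$, and no structural property of $A_{12}$, $A_{21}$, $A_{22}$ beyond their dimensions is needed.
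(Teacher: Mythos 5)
Your proof is correct and takes essentially the same approach as the paper: the block form is inherited from the computation in the proof of Theorem~\ref{thm}, and the rank bound follows from two applications of rank subadditivity, isolating the top-left block of known rank $L$ and bounding the remaining pieces by their dimensions ($M_K$ rows, $K$ columns). The only, immaterial, difference is the grouping order --- you peel off the bottom $M_K$ rows first and then split the top block into its left and right parts, whereas the paper peels off the right $K$ columns first and bounds the rank of the remaining left block by $L + M_K$.
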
 
\begin{proof}
    The structure of~\eqref{eq:thm4} follows immediately from the proof of Theorem~\ref{thm}. The rank follows from viewing $A - AZ^*A$ as $[A_1 \; A_2]$ where the maximal rank of $A_1$ and $A_2$ equals $L+M_K$ and $K$ respectively.
\end{proof}
Our assumption that $K$ and $M_K$ are small implies that the rank of the system does not grow too much and the enriched AZ algorithm remains efficient in this case too.

\subsection{A constructive sampling strategy} \label{sec:changeofbasis2}
From an analytical point of view, the introduction of the matrix $Z$ can also be interpreted as a change of basis, see \S\ref{sec:changeofbasis}. To this end, note that the synthesis operator for an enriched basis $\Phi_{N+K}$~\eqref{eq:enrichedbasis} can be written as 
\[
 \mathcal{T}_{N+K} = \begin{bmatrix} \phi_1 & \phi_2 & \dots & \phi_{N+K} \end{bmatrix} = \begin{bmatrix} \varphi_1 & \dots & \varphi_N & \psi_1 & \dots & \psi_K \end{bmatrix} = \begin{bmatrix} \mathcal{T}_{N} & \mathcal{T}_{K} \end{bmatrix}.
\]
and the sampling operator can be written as $\mathcal{M}_M = \begin{bmatrix} \mathcal{M}_{M_N} \\ \mathcal{M}_{M_K} \end{bmatrix}$. Using~\eqref{eq:phitilde}, it follows that the basis of the first AZ equation equals 
\[
\Tilde{\mathcal{T}}_{N+K} = \mathcal{T}_{N+K}(I-Z^*A).
\]
Choosing $Z$ as defined by~\eqref{eq:Z}, one obtains
\begin{align*}
    \Tilde{\mathcal{T}}_{N+K} &= \mathcal{T}_{N+K} - \begin{bmatrix} \mathcal{T}_{N} & \mathcal{T}_{K} \end{bmatrix}
    \begin{bmatrix}
        Z_{11}^* & 0 \\ 0 & 0
    \end{bmatrix}     
    \begin{bmatrix}
        A_{11} & A_{12} \\ A_{21} & A_{22}
    \end{bmatrix} 
    \\ 
    &= \mathcal{T}_{N+K} - \mathcal{T}_{N} \begin{bmatrix}
        Z_{11}^*A_{11} & Z_{11}^*A_{12}
    \end{bmatrix}.
\end{align*}
When $Z_{11}^*$ is the inverse or a left inverse of $A_{11}$, this simplifies to 
\begin{align*}
    \Tilde{\mathcal{T}}_{N+K} &= \mathcal{T}_{N+K} -\mathcal{T}_N \begin{bmatrix}
        I & Z_{11}^* A_{12}
    \end{bmatrix} = \begin{bmatrix}
        0 & \mathcal{T}_K - \mathcal{T}_N Z_{11}^*A_{12}
    \end{bmatrix}.
\end{align*}
The new approximation set $\Tilde{\Phi}$ then only contains $K$ nonzero basis elements $\Tilde{\psi}_k$ defined by
\begin{equation}
    \Tilde{\psi}_k = \psi_k - \sum_{n=1}^N c_n \varphi_n \quad \text{ with } \mathbf{c} = Z_{11}^*\mathcal{M}_{M_N}\psi_k,
    \label{eq:changeofbasis}
\end{equation}
i.e. the set consists of the additional functions $\{\psi_k\}^K_{k=1}$ minus their approximation in the conventional basis $\{\varphi_n\}^N_{n=1}$ using the partial solver $Z_{11}^*$. 

The new basis functions are independent of the choice of the $M_K$ extra sampling functionals which define $\mathcal{M}_{M_K}$, due to the sparsity of $Z$~\eqref{eq:Z}. Inspecting the new approximation set $\Tilde{\Phi}$ therefore allows to gain insight in choosing the extra sampling functionals. Note that sufficiently rich data points are crucial to obtain accurate approximations, as explained in~\S\ref{sec:fna2bound}. 

Recent results on randomized sampling for $L^2$-approximations~\cite{cohen2017} state a constructive way to obtain a (near-)optimal sampling strategy, i.e.\ a strategy such that (log-)linear oversampling suffices to obtain an accurate least squares fit. To this end, one needs to randomly sample with respect to the inverse Christoffel function, which can be computed assuming an orthonormal basis for the approximation space is available. Throughout the following examples, we will use the inverse Chirstoffel function associated with $\SPAN(\Tilde{\Phi})$ as a tool to gain insight on how to distribute the $M_K$ extra sampling points, yet we will use deterministic samples for simplicity, as they give satisfactory results.

\subsection{Singular value profile}

The AZ algorithm originated with an efficient method to compute Fourier extension approximations which, in hindsight, is a special case in which one can choose $Z=A$~\cite{matthysen2016fastfe}. The method was motivated by the spectra of the matrices $A$ and $AA^*A$, and the relation between them. A number of extensions were explored in the PhD thesis~\cite[Chapter 5]{matthysen2018phd}  involving additional degrees of freedom and sampling points, much like in the current paper, but still using $Z=A$. Theorem~\ref{thm} above describes the rank of $A-AZ^*A$ in algebraic terms and more closely resembles a Schur complement approach than the original Fourier extension scheme. Yet, it remains instructive to examine and interpret the singular value profile of $A$~\eqref{eq:A} and $A-AZ^*A$ in the context of an enriched basis.

To this end, note that the subblock $A_{11}$ is assumed to be well-conditioned, as it consists of the evaluations of the conventional basis in the structured data points. The full matrix $A$ is obtained by adding $M_K$ rows and $K$ columns to $A_{11}$. This introduces (near-)redundancy and therefore causes singular values to approach zero, resulting in increasingly ill-conditioned matrices. Owing to the interlacing property of singular values of nested matrices, the spectrum of $A$ still consists of a large well-conditioned part and a smaller ill-conditioned part.

One formulation of the interlacing property is the following.
\begin{lemma}[\cite{thompson1972}, Theorem 1]\label{lem:interlacing}
Let $A$ be an $m \times n$ matrix with singular values $\alpha_1 \geq \alpha_2 \geq \ldots \geq \alpha_{\min\{m,n\}}$ and let $B$ be a $p \times q$ submatrix of $A$ with singular values $\beta_1 \geq \beta_2 \geq \ldots \geq \beta_{\min\{p,q\}}$. Then:
\[
\begin{array}{ll}
    \alpha_i \geq \beta_i, & i=1,2,\ldots,\min(p,q),\\
    \beta_i \geq \alpha_{i + (m-p) + (n-q)}, & i = 1,2,\ldots,\min(p+q-m,p+q-n).
\end{array}
\]
\end{lemma}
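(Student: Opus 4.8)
The plan is to reduce the singular-value interlacing statement to the classical Cauchy interlacing theorem for eigenvalues of Hermitian matrices, via the Jordan--Wielandt (Hermitian dilation) construction. For an $m \times n$ matrix $A$ I would introduce the Hermitian matrix
\[
 \tilde{A} = \begin{bmatrix} 0 & A \\ A^* & 0 \end{bmatrix} \in \mathbb{C}^{(m+n)\times(m+n)},
\]
whose spectrum is well known to consist of $\pm\alpha_1, \ldots, \pm\alpha_{\min(m,n)}$ together with $|m-n|$ zeros. The key observation is that passing from $A$ to a $p\times q$ submatrix $B$ — i.e.\ deleting $m-p$ rows and $n-q$ columns — corresponds exactly to passing from $\tilde{A}$ to the principal submatrix $\tilde{B} = \begin{bmatrix} 0 & B \\ B^* & 0 \end{bmatrix}$ of size $(p+q)\times(p+q)$. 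Indeed, deleting row $r$ of $A$ removes row $r$ of the block $A$ and column $r$ of the block $A^*$, which is precisely the deletion of index $r$ (row and column) from $\tilde{A}$; deleting column $c$ of $A$ removes index $m+c$ from $\tilde{A}$ in the same way. Thus $\tilde{B}$ is a genuine principal submatrix of $\tilde{A}$, obtained by striking out $k := (m-p)+(n-q)$ indices.

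First I would invoke Cauchy's interlacing theorem: if $\tilde{A}$ has eigenvalues $\lambda_1 \geq \cdots \geq \lambda_{m+n}$ and its principal submatrix $\tilde{B}$ has eigenvalues $\mu_1 \geq \cdots \geq \mu_{p+q}$, then
\[
 \lambda_i \geq \mu_i \geq \lambda_{i+k}, \qquad i = 1, \ldots, p+q.
\]
It then remains to translate these eigenvalue inequalities back into singular-value statements by locating where the $\alpha$'s and $\beta$'s sit in the sorted spectra. Writing out the sorted eigenvalues of $\tilde{A}$, one has $\lambda_i = \alpha_i$ for $i \leq \min(m,n)$ (the positive part), followed by the zeros and the reflected negative part; similarly $\mu_i = \beta_i$ for $i \leq \min(p,q)$.

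For the first relation, the left half $\lambda_i \geq \mu_i$ immediately gives $\alpha_i \geq \beta_i$, valid for $i \leq \min(p,q)$, since $p \leq m$ and $q \leq n$ guarantee $\min(p,q) \leq \min(m,n)$, so both $\lambda_i = \alpha_i$ and $\mu_i = \beta_i$ hold on that range. For the second relation, the right half $\mu_i \geq \lambda_{i+k}$ gives $\beta_i \geq \alpha_{i+k}$ with $k = (m-p)+(n-q)$ — exactly the claimed shift — provided both $\mu_i = \beta_i$ and $\lambda_{i+k} = \alpha_{i+k}$, i.e.\ $i \leq \min(p,q)$ and $i+k \leq \min(m,n)$. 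The latter rearranges to $i \leq \min(m,n) - k = p+q-\max(m,n) = \min(p+q-m,\, p+q-n)$, matching the range stated in the lemma, and one checks that $p+q-\max(m,n) \leq p+q-\max(p,q) = \min(p,q)$, so the first condition is automatically subsumed.

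The conceptual steps — the dilation and Cauchy interlacing — are standard; I expect the only real care to be needed in the final index bookkeeping, namely verifying that the admissible ranges of $i$ coming from Cauchy's theorem coincide exactly with $\min(p,q)$ and $\min(p+q-m,\, p+q-n)$, and that the positive-eigenvalue regions where $\lambda_i = \alpha_i$ and $\mu_i = \beta_i$ are not overrun. That matching is the main (if mild) obstacle; everything else is a direct transcription.
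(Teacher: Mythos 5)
Your proof is correct, and it takes a route that differs from the paper's in a meaningful way. The paper does not give a self-contained argument: it cites Thompson's theorem and sketches its mechanism as Cauchy interlacing applied to the Hermitian matrix $A^*A$, invoked separately for each deleted row and each deleted column (which is why the index shift accumulates to $(m-p)+(n-q)$). That route has a hidden asymmetry: deleting a column of $A$ does yield a principal submatrix of $A^*A$, but deleting a row of $A$ does not --- it is a rank-one positive semidefinite downdate of $A^*A$ --- so rows and columns require different interlacing arguments (or a switch between $A^*A$ and $AA^*$). Your Jordan--Wielandt dilation $\tilde{A} = \begin{bmatrix} 0 & A \\ A^* & 0 \end{bmatrix}$ removes this asymmetry entirely: deleting a row or a column of $A$ becomes, in both cases, deletion of a single index of the Hermitian matrix $\tilde{A}$, so one application of the generalized Cauchy interlacing theorem (strike out $k=(m-p)+(n-q)$ indices, get $\lambda_i \geq \mu_i \geq \lambda_{i+k}$) delivers both inequalities at once. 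Your index bookkeeping is also right: the identification $\lambda_i = \alpha_i$ for $i \leq \min(m,n)$ and $\mu_i = \beta_i$ for $i \leq \min(p,q)$ is valid because all remaining eigenvalues of the dilations are nonpositive, the translation $i + k \leq \min(m,n)$ rearranges exactly to $i \leq \min(p+q-m,\,p+q-n)$, and that bound is indeed subsumed by $i \leq \min(p,q)$ since $\max(m,n) \geq \max(p,q)$. The trade-off between the two approaches: the paper's (Thompson's) argument stays closer to the Gram matrix and standard eigenvalue perturbation facts, while yours requires knowing the spectrum of the dilation but yields a shorter, unified, and fully self-contained derivation.
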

\begin{proof}
 The result is formulated in multiple places, the statement here is exactly that of \cite[Theorem 1]{thompson1972}. Its proof is based on the Cauchy interlacing theorem for Hermitian matrices (see, e.g., \cite[Theorem 8.1.7]{golub1996matrix}) applied to the Hermitian matrix $A^*A$. Note that the interlacing property is invoked for each additional row and each additional column of $A$ compared to $B$, separately, and that is why the index of $\alpha$ is $m-p+n-q$ larger than that of $\beta$ in the second inequality.
\end{proof}

It follows readily that the ill-conditioned part of $A$ has small dimension if both the number of extra functions $K$ and extra samples $M_K$ remain modest.
\begin{theorem}\label{th:interlacing}
Let the singular values of $A_{11}$, the top-left subblock of matrix $A$ given by~\eqref{eq:A}, be contained in the interval $[a,b]$ with $0 < a \leq b$. If $A$ is rectangular with $M_N + M_K \geq N+K$, then it has at most $K$ singular values smaller than $a$ and at most $K+M_K$ singular values larger than $b$.
\end{theorem}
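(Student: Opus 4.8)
The plan is to apply the interlacing Lemma~\ref{lem:interlacing} once, with the large matrix taken to be $A$ and the submatrix taken to be $B = A_{11}$, and then read off both claimed bounds directly from its two inequalities. First I would fix the bookkeeping: set $m = M_N + M_K$, $n = N + K$, $p = M_N$, $q = N$, so that $m - p = M_K$ and $n - q = K$. Since $A$ is rectangular with $m \geq n$, it has exactly $n = N+K$ singular values $\alpha_1 \geq \cdots \geq \alpha_{N+K}$. I would also invoke the standing (oversampling) assumption $M_N \geq N$ that underlies the well-conditioning of $A_{11}$, so that $A_{11}$ genuinely has $N$ singular values $\beta_1 \geq \cdots \geq \beta_N$, all lying in $[a,b]$; in particular $\beta_N \geq a$ and $\beta_1 \leq b$.

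For the first bound (at most $K$ singular values below $a$), I would use the first inequality of Lemma~\ref{lem:interlacing}, namely $\alpha_i \geq \beta_i$ for $i = 1, \ldots, \min(p,q) = N$. Combined with $\beta_i \geq \beta_N \geq a$, this gives $\alpha_i \geq a$ for $i = 1, \ldots, N$. Because $A$ has only $N+K$ singular values in total, the remaining $K$ indices are the only ones that can fall strictly below $a$, which is exactly the claim.

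For the second bound (at most $K + M_K$ singular values above $b$), I would use the second inequality $\beta_i \geq \alpha_{i + (m-p) + (n-q)} = \alpha_{i + M_K + K}$. Taking $i = 1$ yields $\alpha_{M_K + K + 1} \leq \beta_1 \leq b$, so every singular value from index $M_K + K + 1$ onward is at most $b$; hence at most the first $K + M_K$ of them can exceed $b$.

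The step I expect to be the main obstacle is not any computation but the verification that the index ranges of Thompson's inequalities are nonempty and correctly aligned. The second inequality is only asserted for $i \leq \min(p + q - m,\, p + q - n) = \min(N - M_K,\, M_N - K)$, so the choice $i = 1$ is legitimate only when $M_K < N$ and $K < M_N$; these hold under the paper's standing assumption that the enrichment parameters $K$ and $M_K$ are small relative to $N$ and $M_N$. I would dispatch the degenerate cases separately: if, say, $M_K \geq N$, then $K + M_K \geq N + K$ already exceeds the total number of singular values of $A$, so the upper bound is trivially satisfied and nothing is lost. The only other point requiring care is the translation between the inequalities $\alpha_i \geq a$ and $\alpha_j \leq b$ and the \emph{counts} of singular values strictly outside $[a,b]$, which is a routine off-by-one accounting rather than a genuine difficulty.
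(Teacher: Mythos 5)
Your proof is correct and follows essentially the same route as the paper's: a single application of Lemma~\ref{lem:interlacing} with $B = A_{11}$, the identical index bookkeeping ($m-p = M_K$, $n-q = K$), and the same reading-off of the two bounds from Thompson's two inequalities. Your explicit treatment of the degenerate index ranges (e.g.\ $M_K \geq N$) is a minor added refinement that the paper handles only implicitly via its oversampling assumption, but it does not change the substance of the argument.
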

\begin{proof}
The result is an application of  Lemma~\ref{lem:interlacing} with $B = A_{11}$ and, hence, $p=M_N$ and $q=N$. The matrix $A$ has $\min(m,n) = \min(M_N+M_K,N+K) = N+K$ singular values. It also follows from the oversampling condition that $\min(N-M_K,M_N-K) = N-M_K$. Thus, the interlacing inequalities of the previous lemma become
\[
\begin{array}{ll}
    \alpha_i \geq \beta_i, & i=1,2,\ldots,N,\\
    \beta_i \geq \alpha_{i + M_K + K}, & i = 1,2,\ldots,N-M_K.
\end{array}
\]
The conditions of the theorem stipulate that $\beta_1 \leq b$ and $\beta_N \geq a$. From the first inequality, we conclude that at least $N$ singular values of $A$ are bounded below by $a$, hence at most $K$ are possibly smaller. The second inequality shows that $N-M_K$ singular values of $A$ are bounded above by $b$, hence $K+M_K$ are possibly larger.
\end{proof}

Fig.\ \ref{fig:svd} (left) depicts the singular value profile of the system matrix $A$ introduced in example \S\ref{sec:WCextrapts}. Most of the singular values of $A$ are $O(1)$, since they interlace with those of the well-conditioned subblock $A_{11}$. The few extra singular values rapidly decay towards $0$. Similar spectra appear in matrices coming from sampling theory, such as subblocks of the Fourier DFT matrix \cite{barnett2022exponentially, dirckx2023computation}, and in the theory of time-frequency bandlimiting operators. In those contexts the ill-conditioned part is sometimes referred to as the \emph{plunge region} \cite[\S2.8]{daubechies1992tenlectures}. The singular value profile of the system matrix of the first AZ equation is shown in Fig.\ \ref{fig:svd} (right), i.e.\ it depicts the singular values of $A-AZ^*A$ where $Z$ is defined by~\eqref{eq:Z} with $Z_{11}^*$ a left inverse of $A_{11}$. It can be seen that applying the operator $I-AZ^*$ to $A$ largely corresponds to isolating its plunge region, as is the case for the original Fourier extension scheme~\cite{matthysen2016fastfe}.

\begin{figure}
\begin{subfigure}{.68\linewidth}
    \centering
    \includegraphics[width=\linewidth]{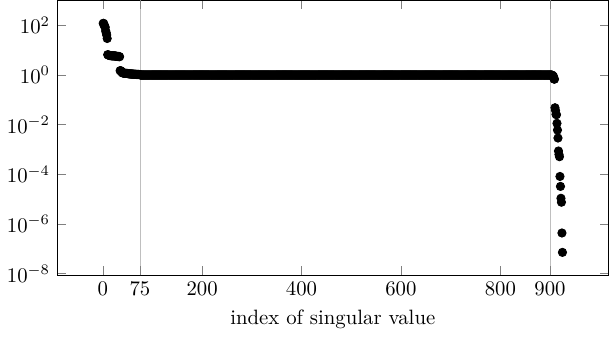}
\end{subfigure}\hfill
\begin{subfigure}{.29\linewidth}
    \centering
    \includegraphics[width=\linewidth]{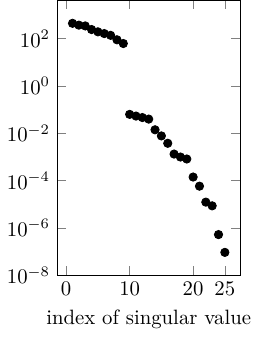}
\end{subfigure}
\caption{Singular value profile of the system matrix $A$ introduced in example~\S\ref{sec:WCextrapts}, having $N = 900$, $K = 25$, $M_N = 4N$ (here: using a cartesian product of Chebyshev nodes), $M_K = 2K$ (left) and the singular value profile of $A-AZ^*A$ (right). The vertical lines mark $K + M_K$ ($75$) and the total number of singular values minus $K$ ($900$). }
    \label{fig:svd}
\end{figure}

\section{Examples}\label{sec:4}

\subsection{Fourier series augmented with polynomials}
The Fourier basis suffers from the Gibbs phenomenon when it is used to approximate non-periodic functions. The problem can be reduced by augmenting the basis with a finite number of polynomials, an idea introduced by Krylov \cite{krylov} (see also \cite[Example 2]{adcock2019frames}). For approximation on $[0,1]$, this results in an approximation set
\[
\Phi_{N+K} = \{\varphi_n\}_{n=1}^N \cup \{ \psi_k \}_{k=1}^K = \{\kern1pt e^{2 \pi i n t} \kern1pt \}_{n=-\frac{N-1}{2}}^{\frac{N-1}{2}} \cup \{ \psi_k \}_{k=1}^K,
\]
where $N$ is assumed to be odd. For simplicity, we use Legendre polynomials $\psi_k$. Note that the constant polynomial $\psi_0$ can be excluded, as it is part of the Fourier basis. The $L^2$-convergence rate in this set is determined by the number of added polynomials $K$ \cite[Proposition 18]{adcock2019frames}. This effect can be explained by considering an approximation where the polynomials implicitly periodize the function and its $K-1$ derivatives, such that the Fourier coefficients of the new function decay more rapidly.

It is customary to compute the approximation by explicitly matching the derivatives of $f$ at the endpoints using a polynomial, subtracting that polynomial from $f$ and approximating the near-periodic remainder with an FFT \cite{eckhoff1998singularities,javed2016eulermaclaurin}. This technique is sometimes referred to as \emph{polynomial subtraction}. A least squares fit seems more expensive in comparison, yet it is both more stable and more accurate and, using AZ, can be implemented with similar complexity.

\subsubsection{Approximation using an oversampled equispaced grid}\label{sec:FLequi}

The discrete best approximation to a non-periodic function $f$ on an equispaced grid $\{t_m\}_{m=1}^M$ can be computed by solving
\begin{equation}
    \begin{bmatrix}
    A_{11} & A_{12} 
\end{bmatrix}
 \begin{bmatrix}
    \mathbf{x}_N \\ \mathbf{x}_K
\end{bmatrix}
 = \mathbf{b}
\label{eq:fourierlegendre}
\end{equation}
with $A_{11}\in \mathbb{C}^{M \times N}, A_{12}\in \mathbb{C}^{M \times K}$ and $\mathbf{b}_m = f(t_m)$. It is assumed that the system matrix is oversampled in the sense that $M > N + K$. This system can be solved efficiently using the AZ algorithm with a matrix Z \eqref{eq:Z} having $M_N = M$ and $M_K = 0$, where the matrix $Z_{11}^*$ is a left inverse of $A_{11}$. Note that a matrix-vector mulitplication with both $Z_{11}^*$ and $A_{11}$  can be computed using the FFT algorithm, requiring $\mathcal{O}(M \log{M})$ flops. Using the results of Theorem \ref{thm}, the first AZ equation simplifies to
\[
 (A_{12} - A_{11}Z^*_{11}A_{12}) \mathbf{x}_K \approx (I-A_{11}Z_{11}^*)\mathbf{b}
\]
which has a system matrix of size $M \times K$. The computational compexity of the AZ algorithm is then dominated by
\begin{itemize}
    \item construction of the system matrix: $\mathcal{O}(KM\log{M}) + \mathcal{O}(K^2M)$ flops,
    \item solving the least squares system: $\mathcal{O}(MK^2)$ flops.
\end{itemize}
In contrast, a regular least squares solver requires $\mathcal{O}(M(N+K)^2)$ flops. Assuming that $N \gg K$, the AZ algorithm is therefore much more efficient. Fig.\ \ref{fig:FLtiming} displays the timings of the AZ algorithm as well as of a regular solve of the least squares system \eqref{eq:fourierlegendre}, for $M = 2N$ and $K = 5$. Additionally, the results are compared to the randomized AZ algorithm introduced in \cite[\S5.1.2]{matthysen2018phd}, in which $Z = A$. The experiment was run on a contemporary laptop using an implementation in Julia.

\begin{figure}
\centering
\includegraphics[width=.65\textwidth]{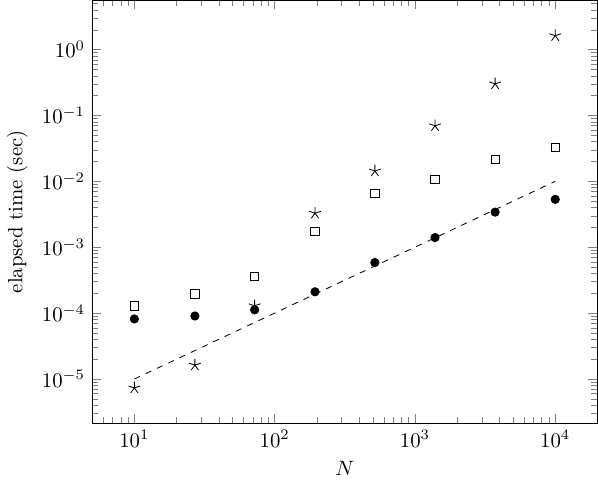}
\caption{Timings for computing the discrete best approximation~\eqref{eq:fourierlegendre} with an increasing number of Fourier basis functions and $K = 5$ Legendre polynomials. Stars: regular least squares solver (\texttt{ldiv} from Julia's LinearAlgebra module), squares: randomized AZ algorithm with $Z = A$ \cite[(5.22)]{matthysen2018phd}, dots: AZ algoritm for enriched bases. The dashed line marks $\mathcal{O}(N)$.}
\label{fig:FLtiming}
\end{figure}

\subsubsection{Approximation using an equispaced grid with extra points} \label{sec:FLextrapts}
The first AZ equation can also be interpreted as a new approximation problem after a change of basis, recall~\S\ref{sec:changeofbasis2}. In this case, the new basis consists of the Legendre polynomials minus their least squares Fourier series approximations. Fig.\ \ref{fig:FLbasis} shows the first element $\Tilde{\psi_1}$ of this new approximation set $\Tilde{\Phi}$~\eqref{eq:changeofbasis}. As can be seen, the function is non-periodic and clearly exhibits the Gibbs phenomenon. The new basis functions $\tilde{\Phi}$ are small in the interior of the interval and grow larger near the boundaries. As explained in~\S\ref{sec:changeofbasis2}, we can obtain information on how to optimally sample for $L^2$-approximation in this non-standard set using the inverse Christoffel function. Fig.\ \ref{fig:FLchristoffel} shows this (near-)optimal sampling distribution, which can be computed numerically after orthonormalizing the approximation set. It clearly shows that many more samples are needed close to the boundary. %Near the boundary, the distribution behaves approximately as $\mathcal{O}(1/x^2)$.
Note that in contrast to other applications of Christoffel theory the numerical orthogonalization is fairly efficient in this case, as the size of the new basis is small.

Incorporating this knowledge into the least squares problem formulation results in a system matrix~\eqref{eq:A} with $M_N = N$ equispaced sample points and $M_K = 2K$ sample points clustered towards the boundary at $x = 0$ and $x = 1$ (characterized by \texttt{[1 ./range(1,1000,K); 1 .- 1 ./range(1,1000,K)]} in Julia notation). The matrix $Z$~\eqref{eq:Z} of the AZ algorithm can then be constructed with $Z_{11}^*$ being the inverse of $A_{11}$. Using the results of Theorem~\ref{thm}, the first AZ equation simplifies to
\[
 (A_{22} - A_{21}Z^*_{11}A_{12}) \mathbf{x}_K \approx \begin{bmatrix}
     -A_{21}Z_{11}^* & I \kern1pt
 \end{bmatrix} \mathbf{b}
\]
where the system matrix is of size $2K \times K$. Assuming $K$ is constant, solving the least squares problem only requires a constant amount of time. However, the cost of the algorithm is still dominated by the construction of the system matrix, requiring $\mathcal{O}(KM\log{M})$ flops.

Fig.\ \ref{fig:FLaccuracy} shows the accuracy of the approximation of $f = e^x + \cos(5(x-0.1)^2)$ \cite[(5.22)]{matthysen2018phd} obtained by solving the reformulated least squares problem compared to solving~\eqref{eq:fourierlegendre}. Both approximations use $K = 5$ and are computed using the AZ algorithm. The reformulated problem is at least as accurate in the $L^2$-norm and approximately a factor of $10$ more accurate pointwise, using only $N+2K$ samples instead of $2N$. For both AZ algorithms $\norm{I-AZ^*}_2 = \mathcal{O}(1)$ holds such that the accuracy of the approximations is very close to the accuracy of the related discrete best approximation, following Lemma~\ref{lm:error}. Importantly, both approximations also have a bounded norm of the coefficient vector $\norm{\mathbf{x}}_2$, such that highly accurate approximations can be recovered despite the ill-conditioning of the system matrices.

\begin{figure}
    \begin{minipage}{0.48\textwidth}
        \centering
        \includegraphics[width=.95\linewidth]{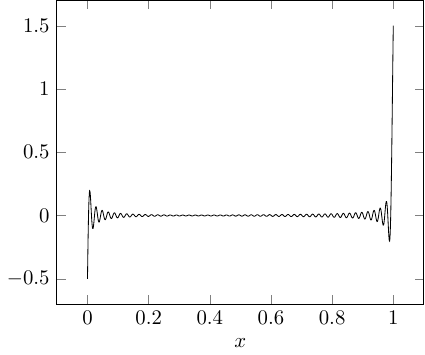}
        \caption{First element of the approximation set $\Tilde{\Phi}$ associated to the first AZ equation for the Fourier + Legendre approximation set.}
        \label{fig:FLbasis}
    \end{minipage}\hfill
    \begin{minipage}{0.48\textwidth}
        \centering
        \includegraphics[width=.95\linewidth]{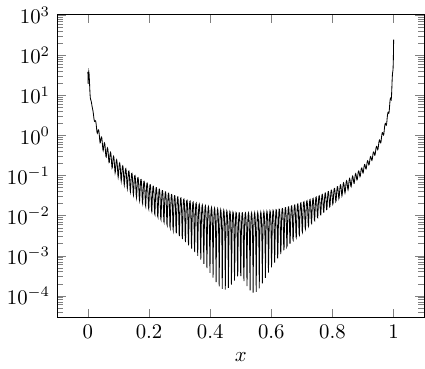}
        \caption{(Near-)optimal sampling distribution for the approximation set $\Tilde{\Phi}$ associated to the first AZ equation for the Fourier + Legendre approximation set.}
        \label{fig:FLchristoffel}
    \end{minipage}
\end{figure}

\begin{figure}
\begin{subfigure}{.49\linewidth}
    \centering
    \includegraphics[width=\linewidth]{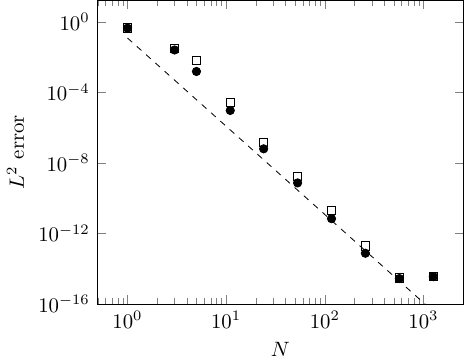}
\end{subfigure}\hfill
\begin{subfigure}{.49\linewidth}
    \centering
    \includegraphics[width=\linewidth]{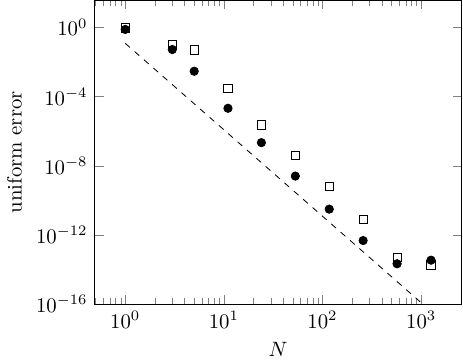}
\end{subfigure}
    \caption{Accuracy of the Fourier + Legendre approximation of $f = e^x + \cos(5(x-0.1)^2)$ with $K = 5$ computed with the AZ algorithm. Squares: discrete approximation on an oversampled equispaced grid ($M_N = 2N$), dots: discrete approximation on an equispaced grid with extra points clustering towards boundaries ($M_N = N, M_K = 2K$). The dashed line marks $\mathcal{O}(N^{-K}) = \mathcal{O}(N^{-5})$.}
    \label{fig:FLaccuracy}
\end{figure}

\subsection{Polynomials augmented with weighted polynomials}\label{sec:example2}
Often a certain characteristic of a function to be approximated is known, such as its oscillatory or singular behaviour. One can then aim at approximating said function in an approximation set which incorporates this characteristic. The set could be of the following form:
\[
\Phi_{N+K} = \{\varphi_n\}_{n=1}^N \cup w(\mathbf{x})\{\psi_k\}_{k=1}^K \qquad \text{with } \psi_k = w(\mathbf{x}) \varphi_k(\mathbf{x})
\]
in which $\varphi_n$ are smooth functions and the weight function $w(\mathbf{x})$ embodies the oscillatory or singular trait of the function $f$. This approximation set is investigated as a frame in \cite[Example 3]{adcock2019frames}. 

\subsubsection{Approximation using an oversampled Chebyshev grid}
As an example, we approximate the Green's function $G(\mathbf{x},\mathbf{y})$ ($\mathbf{x}, \mathbf{y} \in \mathbb{R}^2$) of the the 2D gravity Helmholtz equation~\cite{barnett2015high}. From~\cite[(11)]{barnett2015high} it is known that the function is of the following form:
\[
G(\mathbf{x},\mathbf{y}) = A(\mathbf{x},\mathbf{y})\frac{1}{\log{\lvert \mathbf{x} - \mathbf{y} \rvert}} + B(\mathbf{x},\mathbf{y})
\]
where $A$ and $B$ are analytic in both coordinates of both variables. We will approximate this four-dimensional function for both $\mathbf{x}$ and $\mathbf{y}$ on a semicircle, parametrised by 
\begin{equation}
\label{eq:semicirle}
\vec{\gamma}(s) = \left[ \cos{(2\pi s)}, \sin{(2\pi s)} \right] \text{ for } s \in [0, 0.5].
\end{equation}
The approximation set consists of $N$ bivariate Chebyshev polynomials in the parametrisation variables $s_x$ and $s_y$
\[
\{\varphi_n\}_{n=1}^{\sqrt{N} \times \sqrt{N}} = \{T_{i}(s_x)\kern1pt T_j(s_y)\}_{i,j\kern1pt = \kern1pt (0,0)}^{(\sqrt{N}-1, \kern1pt \sqrt{N}-1)}
\]
and $K$ weighted bivariate Chebyshev polynomials with $w(s_x,s_y) = 1/\log{\lvert \vec{\gamma}(s_x) - \vec{\gamma}(s_y) \rvert }$. The approximation grid is the cartesian product of $2\sqrt{N}$ Chebyshev nodes in the $s_x$-direction and $2\sqrt{N}$ Chebyshev extremae in the $s_y$-direction, such that the function is not evaluated directly at the logarithmic singularity located at $s_x = s_y$. The discrete approximation on the oversampled grid can then again be computed by solving the least squares problem \eqref{eq:fourierlegendre} using the AZ algorithm with $Z_{11}^*$ a left inverse of $A_{11}$, both having fast matrix multiplications using the DCT algorithm. Fig.\ \ref{fig:WPaccuracy} (left) displays the approximation error for $K = 5^2 = 25$. The convergence behaviour is compared to the Chebyshev approximant with $K = 0$. The convergence rate increases significantly by augmenting the approximation set. On Fig.\ \ref{fig:WPaccuracy} (right) the timings of these approximations are displayed, showing that the computational costs of both algorithms differ only by a constant factor.

\subsubsection{Approximation using an oversampled Chebyshev grid with extra points}\label{sec:WCextrapts}

Suppose one wants to increase the accuracy close to the singularity at the diagonal $s_x = s_y$. This can be achieved by adding points close to the diagonal to the oversampled Chebyshev grid, without much affecting the computational cost of the algorithm. For example, this can result in a new least squares problem \eqref{eq:A} with $M_N = 4N$ and $M_K = 2K$. This problem can be solved efficiently using the AZ algorithm with a matrix $Z$ defined by~\eqref{eq:Z}, where $Z_{11}$ is again a left inverse of $A_{11}$. Using the results of Theorem~\ref{thm}, the first AZ equation simplifies to 
\[
\begin{bmatrix}
    A_{12} - A_{11}Z_{11}^*A_{12} \\ A_{22} - A_{21}Z_{11}^*A_{12}
\end{bmatrix} \mathbf{x}_{K} \approx (I-AZ^*) \mathbf{b}.
\]
As explained in~\S\ref{sec:changeofbasis2}, the first AZ equation can be viewed as a discrete approximation problem in a new basis. As opposed to~\S\ref{sec:FLextrapts}, the sample set for this new approximation problem now not only includes the extra $M_K$ points, but also the oversampled grid related to the conventional basis. The extra points can therefore be chosen more freely.

As an example, we again approximate the Green's function, now also adding $2K$ points close to the diagonal at $s_x = s_y$ ($K$ equispaced points at a distance \texttt{1e-3} above the diagonal and $K$ points at the same distance below the diagonal). Fig.\ \ref{fig:WPerrorplot} shows the error plots related to both sampling strategies for $N = 900$ and $K = 25$. Adding the points has a clear effect on the accuracy near the singularity.

\begin{figure}
    \centering
    \includegraphics[width=.5\textwidth]{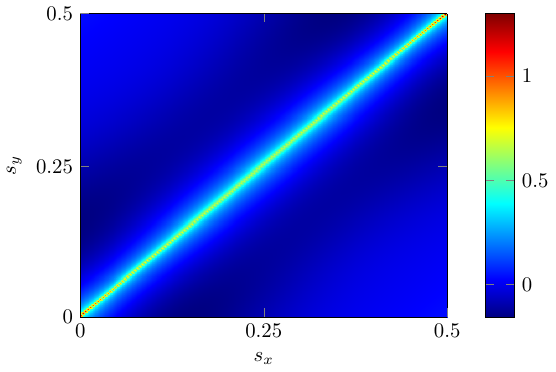}
    \caption{The Green's function $G(\mathbf{x}, \mathbf{y})$ of the 2D gravity Helmholtz equation for $\mathbf{x} = \vec{\gamma}(s_x)$ and $\mathbf{y} = \vec{\gamma}(s_y)$, where $\vec{\gamma}(s)$ parametrises a semicircle \eqref{eq:semicirle}.}
    \label{fig:WC_function}
\end{figure}

\begin{figure}
\begin{subfigure}{.49\linewidth}
    \centering
    \includegraphics[width=\textwidth]{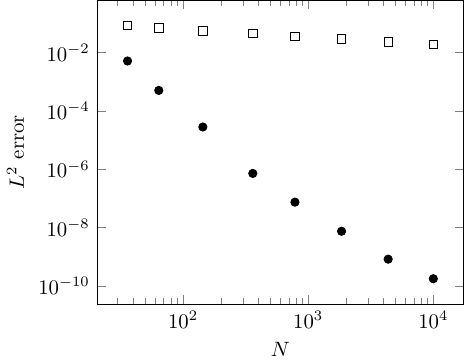}
\end{subfigure}\hfill
\begin{subfigure}{.49\linewidth}
    \centering
    \includegraphics[width=\textwidth]{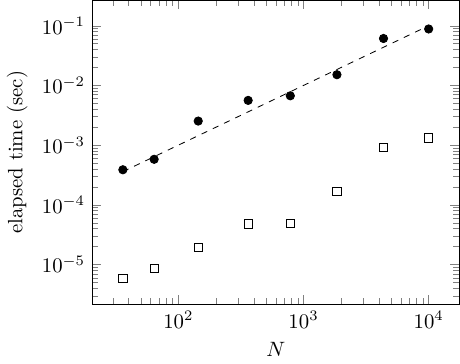}
\end{subfigure}
\caption{The $L^2$ error (left) and the timings (right) for the discrete approximations on an oversampled Chebsyhev grid with $M = 4N$. Dots: Chebsyhev + weighted Chebyshev approximation using $K = 25$, squares: Chebyshev approximation with $K = 0$. The dashed line marks $\mathcal{O}(N)$.}
\label{fig:WPaccuracy}
\end{figure}

\begin{figure}
\begin{subfigure}{.49\linewidth}
    \centering
    \includegraphics[width=\linewidth]{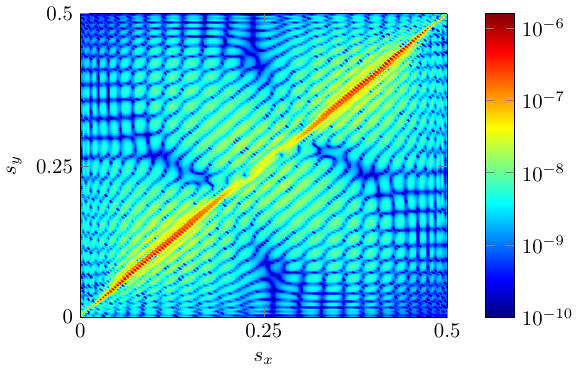}
\end{subfigure}\hfill
\begin{subfigure}{.49\linewidth}
    \centering
    \includegraphics[width=\linewidth]{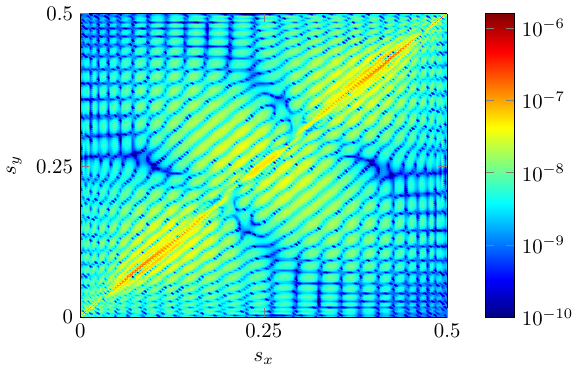}
\end{subfigure}
\caption{Error plot for the Chebyshev + weighted Chebyshev approximations with $N = 900$, $K = 25$, using an oversampled Chebyshev grid $M_N = 4N$. Left: no extra points ($M_K = 0$), right: extra points close to the diagonal ($M_K = 2K$).}
\label{fig:WPerrorplot}
\end{figure}

\section{Application: Enriched Spectral-Galerkin methods} \label{sec:5}
Enriched approximation schemes are often used to solve (partial) differential equations for which the solution exhibits known singular behaviour. As an example, we analyze the applicability of AZ to Enriched Spectral-Galerkin methods to solve elliptic problems. We show that a recent algorithm proposed by Chen and Shen~\cite{chen2018enriched} can itself be interpreted as an AZ algorithm. Pursuing that interpretation further leads to a modified problem formulation and associated AZ algorithm in which some of the assumptions of the existing method can be removed. Crucially, both approaches allow to combine the Galerkin method efficiently with some form of oversampling, such that accurate approximations can still be obtained despite ill-conditioning of the system matrix, as explained in~\S\ref{sec:fna2bound}. 

\subsection{Galerkin method}
We first briefly summarize the problem formulation of~\cite{chen2018enriched}. The goal is to solve an elliptic problem 
\begin{equation} \label{eq:pde}
    \mathcal{L}(u) = f \quad \text{in } \Omega,
\end{equation}
with boundary conditions on $\partial \Omega$, using a weak formulation. Find a function $u$ in a solution space $X$ such that 
\begin{equation*}
    a(u,v) = \langle f, v \rangle, \quad \forall v \in X,
\end{equation*}
where $f$ is a given function in the dual space $X'$ and $a(u,v)$ is a coercive and continuous bilinear form in $X \times X$. Note that the solution space $X$ only includes functions which satisfy the boundary conditions of~\eqref{eq:pde}. A standard Galerkin method aims at approximating the solution $u$ by a function $u_N$ which lies in the span of conventional basis functions $\{\varphi_n\}_{n=1}^N$ such that 
\begin{equation*}
    a(u_N, v_N) = \langle f, v_N \rangle, \quad \forall v_N \in \SPAN(\{\varphi_n\}_{n=1}^N).
\end{equation*}

However, when the first $K$ leading singular terms $\psi_k, k=1\dots K$ of the solution are known, these could be added to the approximation set, resulting in an enriched basis $\Phi_{N+K}$ with the exact form of~\eqref{eq:enrichedbasis}. The straightforward extension of the Galerkin method then amounts to computing $u_N^K \in~\SPAN(\Phi_{N+K})$ such that 
\begin{equation*}
    a(u_N^K, v_N^K) = \langle f, v_N^K \rangle, \quad \forall v_N^K \in \SPAN(\Phi_{N+K}).
\end{equation*}
In order to find $u_N^K$ one needs to solve the following square linear system:
\begin{equation} \label{eq:ESG1}
    \begin{bmatrix}
        A_{11} & A_{12} \\ A_{21} & A_{22} 
    \end{bmatrix} \begin{bmatrix}
        \mathbf{x}_N \\ \mathbf{x}_K
    \end{bmatrix} = \begin{bmatrix}
        \mathbf{f}_N \\ \mathbf{f}_K
    \end{bmatrix}
\end{equation}
where $(A_{11})_{n,n'} = a(\varphi_{n'}, \varphi_n)$, $(A_{12})_{n,k} = a(\psi_k, \varphi_n)$, $(A_{21})_{k,n} = a(\varphi_n, \psi_k)$, $(A_{22})_{k,k'} = a(\psi_{k'}, \psi_k)$, $(\mathbf{f}_N)_n = \langle f, \varphi_n \rangle$ and $(\mathbf{f}_K)_k = \langle f, \psi_k \rangle$.

In~\cite{chen2018enriched}, it is proposed to solve~\eqref{eq:ESG1} using the Schur-complement method which, as mentioned in~\S\ref{sec:3}, is equivalent to the AZ algorithm for enriched bases with $Z$ as defined in~\eqref{eq:Z} having $Z_{11}^* = A_{11}^{-1}$. This method is referred to as \textit{ESG-I}~\cite[\S2.1]{chen2018enriched}. The cost of the algorithm is dominated by $K$ applications of the solver $A_{11}^{-1}$. Although it is efficient, it is mentioned in~\cite{chen2018enriched} that the error can deteriorate significantly due to the ill-conditioning. As explained in~\S\ref{sec:fna2bound}, this is a known phenomenon when approximating in overcomplete sets and the effect can be mitigated by a combination of oversampling and regularization. 

\subsection{Galerkin method combined with smoothness constraints}
The need to incorporate oversampling in the problem formulation was also identified in~\cite{chen2018enriched} and led to a new algorithm referred to as \textit{ESG-II} \cite[\S2.2]{chen2018enriched}. The method relies on the spectral decay of the coefficients $\mathbf{x}_N$ related to the conventional basis functions. In ESG-II, the vanishing of late coefficients is enforced and leads to extra constraints which are used to determine the coefficients in a new basis, consisting of the singular functions minus their approximation in the conventional basis. According to the interpretation of the AZ algorithm described in~\S\ref{sec:changeofbasis2}, we can therefore reformulate ESG-II as an AZ algorithm. Consider the following rectangular system of equations:
\begin{equation} \label{eq:ESG2}
    \begin{bmatrix}
        A_{11} & A_{12} \\ I_{(N-M_K+1:N,\kern1pt:)} & 0
    \end{bmatrix} \begin{bmatrix}
        \mathbf{x}_N \\ \mathbf{x}_K
    \end{bmatrix} = \begin{bmatrix}
        \mathbf{f}_N \\ 0
    \end{bmatrix}
\end{equation}
where $A_{11}, A_{12}$ and $\mathbf{f}_N$ are defined as before and $I_{(N-M_K+1:N,\kern1pt:)}$ contains the last $M_K \geq K$ rows of an $N \times N$ identity matrix. The extra rows impose weakly, in a least squares sense, that the last $M_K$ coefficients in the conventional basis should be small. ESG-II solves this system using the AZ algorithm for enriched bases with $Z$ as defined in~\eqref{eq:Z} having $Z_{11}^* = A_{11}^{-1}$. The cost of the algorithm is dominated by constructing the system matrix $A-AZ^*A$, which requires $K$ applications of the solver $A_{11}^{-1}$. The method has shown to be effective for solving problems with weakly singular solutions~\cite{chen2018enriched} as well as for singularly perturbed problems and singular integral equations~\cite{chen2020high}.

\subsection{Galerkin method combined with collocation}
ESG-II succeeds in producing accurate results for enriched approximation spaces when there is spectral decay of the coefficients in the conventional basis. Yet, this condition can also be avoided by substituting the smoothness constraints for collocation constraints. These constraints impose that the (partial) differential equation is satisfied in a chosen set of collocation points. Again, the constraints can be oversampled leading to a rectangular method. Although this approach loosens the smoothness condition on the conventional basis functions, i.e.\ spectral decay of the coefficients is not expected, it does require a higher order of differentiability of all basis functions, since a strong formulation is used instead of a weak formulation. 

Consider the following rectangular system of equations:
\begin{equation} \label{eq:ESGAZ}
    \begin{bmatrix}
        A_{11} & A_{12} \\ B_{21} & B_{22}
    \end{bmatrix} \begin{bmatrix}
        \mathbf{x}_N \\ \mathbf{x}_K
    \end{bmatrix} = \begin{bmatrix}
        \mathbf{f}_N \\ \mathbf{b}_{M_K}
    \end{bmatrix}
\end{equation}
where $A_{11}, A_{12}$ and $\mathbf{f}_N$ are defined as before and \begin{align*}
(B_{21})_{m,n} = \mathcal{L}(\varphi_n)(t_m) \\
(B_{22})_{m,k} = \mathcal{L}(\psi_k)(t_m) \\
(\mathbf{b}_{M_K})_m = f(t_m)
\end{align*}
define the collocation constraints, where $\{t_m\}_{m=1}^{M_K}$ is a set of $M_K \geq K$ collocation points in $\Omega \setminus \partial\Omega$. This system can again be efficiently solved using the AZ algorithm with $Z$ as defined by~\eqref{eq:Z} having $Z_{11}^* = A_{11}^{-1}$. The cost is again dominated by constructing the system matrix $A-AZ^*A$, which requires $K$ applications of the solver $A_{11}^{-1}$.

\subsection{Example: Poisson equation in a rectangular domain}
As an example, we redo the problem proposed in~\cite[\S3.1]{chen2018enriched}, i.e.\ approximating a weakly singular solution to the 2D Poisson equation in a rectangular domain $\Omega = [-1,1]^2$ with homogeneous Dirichlet boundary conditions. Here, homogenized Jacobi polynomials are used as the spectral basis,
\[
     \{\varphi_n\}_{n=1}^N = \{ \phi_i(x) \phi_j(y): 1 \leq i,j \leq \sqrt{N} \} \quad \text{where } \phi_i(z) = (1-z^2)J_{i-1}^{1,1}(z),
\]
and one leading singular term $\psi_1$ is identified using the results from~\cite{li2005particular} and homogenized to satisfy the boundary conditions. For more details, we refer to~\cite[\S3.1]{chen2018enriched}. On Fig.\ \ref{fig:esg}, the accuracy of ESG-II (using $M_K = 2$) is compared to the Galerkin method combined with collocation constraints (using an $M_K = 5^2$ equispaced collocation grid). As a reference solution, ESG-II with $N = 900$ is used. Both methods converge similarly. The error of the standard Galerkin method without enrichment is shown for comparison. As expected, its accuracy is very poor.

A direction for future work is to study the real-life computational cost of the algorithm for optimized implementations. Moreover, the results strongly motivate further research on the applicability of the AZ algorithm for other types of enriched solvers for PDEs, such as FEM solvers.

\begin{figure}
    \centering
    \includegraphics[width=.6\textwidth]{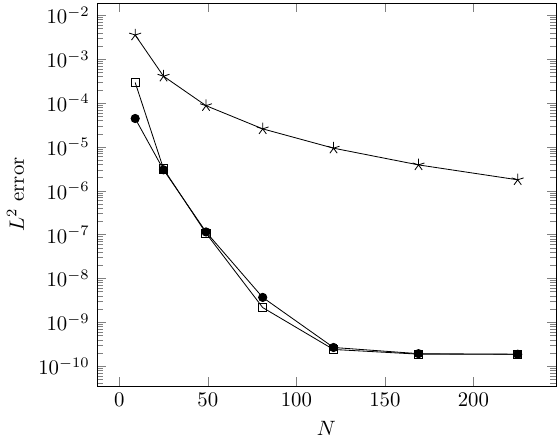}
    \caption{Accuracy of the solution to the 2D Poisson equation in a rectangular domain. Stars: standard Galerkin method without enrichment (i.e.\ $K = 0$), squares: Galerkin method combined with smoothness constraints (ESG-II) using $K = 1$ and $M_K = 2$, dots: Galerkin method combined with collocation using $K = 1$ and $M_K = 25$. The results for the standard Galerkin method and ESG-II also follow from~\cite[Fig.\ 4]{chen2018enriched}.}
    \label{fig:esg}
\end{figure}

\bibliographystyle{abbrv}
\bibliography{references}

\end{document}